\newtheorem{theorem}{Theorem}
\newtheorem{conclusion}[theorem]{Conclusion}
\newtheorem{definition}[theorem]{Definition}
\newtheorem{example}[theorem]{Example}
\newtheorem{remark}[theorem]{Remark}
\begin{document}

\title{Mappings On Soft Classes \bigskip }
\author{Athar Kharal$^{1}$\thanks{%
Corresponding author. Ph. 0092 333 6261309} \thanks{%
This paper was originally submitted on 17 Oct., 2008 to Information Sciences
with the title of \textquotedblleft Mappings on Soft Sets\textquotedblright\
and was given MS\# INS-D-08-1231 by EES.} and B. Ahmad$^{2}$\thanks{%
Present Address: Department of Mathematics, King Abdul Aziz University, P.O.
Box 80203, Jeddah-21589, SAUDI ARABIA}\medskip  \\
$^{1}$National University of Sciences and Technology (NUST), \\
Islamabad, PAKISTAN\\
$^{2}$Centre for Advanced Studies in Pure and Applied Mathematics,\\
Bahauddin Zakariya University, Multan, PAKISTAN\\
Email: \textit{atharkharal@gmail.com, drbashir9@gmail.com}}
\date{}
\maketitle

\begin{abstract}
In this paper, we define the notion of a mapping on soft classes and study
several properties of images and inverse images of soft sets supported by
examples and counterexamples. Finally, these notions have been applied to
the problem of medical diagnosis in medical expert systems.\bigskip \newline
\textbf{Keywords :} Soft set; Soft class; Mapping on soft classes; Image of
soft set; Inverse image of a soft set; Medical diagnosis in medical expert
systems.\textit{\ \medskip }\newline
\end{abstract}

\section{\textbf{Introduction\protect\bigskip }}

To solve complicated problems in economics, engineering and environment, we
cannot successfully use classical methods because of different kinds of
incomplete knowledge, typical for those problems. There are four theories:
Theory of Probablity, Fuzzy Set Theory (FST) \cite{fst65zad}, Interval
Mathematics and Rough Set Theory (RST) \cite{rst83paw}, which we can
consider as mathematical tools for dealing with imperfect knowledge. All
these tools require the pre-specification of some parameter to start with,
e.g. probablity density function in Probablity Theory, membership function
in FST and an equivalence relation in RST. Such a requirement, seen in the
backdrop of imperfect or incomplete knowledge, raises many problems. At the
same time, incomplete knowledge remains the most glaring characteristic of
humanistic systems -- systems exemplified by economic systems, biological
systems, social systems, political systems, information systems and, more
generally, man-machine systems of various types.

Noting problems in parameter specification Molodtsov \cite{sfst99mol}
introduced the notion of soft set to deal with problems of incomplete
information. Soft Set Theory (SST) does not require the specification of a
parameter, instead it accommodates approximate descriptions of an object as
its starting point. This makes SST a natural mathematical formalism for
approximate reasoning. We can use any parametrization we prefer: with the
help of words, sentences, real numbers, functions, mappings, and so on. This
means that the problem of setting the membership function or any similar
problem does not arise in SST.

SST has seminal links with rough set technique of automated knowledge
discovery. Soft set being collection of information granules, bears a close
resemblance with rough sets. A rough set \cite{rst83paw} is defined to be a
set given by an upper and a lower approximation sets from a universe of
information granules. Aktas and Cagman \cite{sfal07akt} have shown that,
both, an arbitrary rough set or an arbitrary fuzzy set may be expressed as a
soft set. Hence Soft Set Theory is more general a set up than RST and/or
FST. Links between soft sets and information systems and hence to Rough Set
Theory, have been further studied in \cite{sfst05pei,sfst05xia,sfst08zou}.
On the other hand, techniques from RST have been found applicable to SST,
due to the affinity of both approaches. Maji, Biswas and Roy \cite{sfst02maj}
applied the technique of knowledge reduction to the information table
induced by a soft set. Another parametrization reduction of soft set was
proposed in \cite{sfst03che,sfst05che}. Recently Z. Kong \textit{et.al.} has
also proposed yet another novel method of parameter reduction in \cite%
{sfst08konB}.

Applications of Soft Set Theory in other disciplines and real life problems
are now catching momentum. Molodtsov \cite{sfst99mol} successfully applied
the Soft Set Theory into several directions, such as smoothness of
functions, Riemann-integration, Perron integration, Theory of Probability,
Theory of Measurement and so on. Kovkov \textit{et.al.} \cite{sfst07kov} has
found promising results by applying soft sets to Optimization Theory, Game
Theory and Operations Research. Maji and Roy \cite{sfst02maj} applied soft
sets in a multicriteria decision making (MCDM) problem. It is based on the
notion of knowledge reduction of rough sets. Mushrif and Sengupta \cite%
{sfst06mus} based their algorithm for natural texture classification on soft
sets. This algorithm has a low computational complexity when compared to a
Bayes technique based method for texture classification. Zou and Xia \cite%
{sfst08zou} have exploited the link between soft sets and data analysis in
incomplete information systems.

In this paper, we first introduce the notion of mapping on soft classes.
Soft classes are collections of soft sets (Definition \ref{df_sftClass}). We
also define and study the properties of soft images and soft inverse images
of soft sets, and support them with examples and counterexamples. Finally,
these notions have been applied to the problem of medical diagnosis in
medical expert systems.\bigskip \newline

\section{\textbf{Preliminaries\protect\bigskip }}

First we recall basic definitions and results.

\begin{definition}
\cite{sfst99mol}\label{st-SoftSet} A pair $(F,A)$ is called a soft set over $%
X$, where $F$ is a mapping given by $F:A\rightarrow P(X).$\newline
In other words, a soft set over $X$ is a parametrized family of subsets of
the universe $X.$ For $\varepsilon \in A,$ $F(\varepsilon )$ may be
considered as the set of $\varepsilon $-approximate elements of the soft set 
$(F,A)$. Clearly a soft set is not a set in ordinary sense.
\end{definition}

\begin{definition}
\cite{sfst05pei}\label{st-subset} For two soft sets $(F,A)$ and $(G,B)$ over 
$X$, we say that $(F,A)$ is a soft subset of $(G,B),$ if\newline
$(i)$ $A\subseteq B,$ and\newline
$(ii)$ $\forall ~\varepsilon \in A,F(\varepsilon )\subseteq G(\varepsilon )$.%
\newline
We write $(F,A)~\widetilde{\subseteq }~(G,B)$. $(F,A)$ is said to be a soft
super set of $(G,B)$, if $(G,B)$ is a soft subset of $(F,A)$. We denote it
by $(F,A)~\widetilde{\supseteq }~(G,B)$.
\end{definition}

\begin{definition}
\cite{sfst03maj}\label{st-Union} Union of two soft sets $(F,A)$ and $(G,B)$
over the common universe $X$ is the soft set $(H,C),$ where $C=A\cup B,$ and 
$\forall ~\varepsilon \in C,$%
\begin{equation*}
H(\varepsilon )=\left\{ 
\begin{array}{ccc}
F(\varepsilon ), & if & \varepsilon \in A-B, \\ 
G(\varepsilon ), & if & \varepsilon \in B-A, \\ 
F(\varepsilon )\cup G(\varepsilon ), & if & \varepsilon \in A\cap B.%
\end{array}%
\right.
\end{equation*}%
We write $(F,A)~\tilde{\cup}~(G,B)=(H,C).$
\end{definition}

Maji, Biswas and Roy defined the intersection of two soft sets as:

\begin{definition}
\label{intersection_def_of_Maji}\cite{sfst03maj} Intersection of two soft
sets $(F,A)$ and $(G,B)$ over $X$ is a soft set $(H,C),$ where $C=A\cap B,$
and $\forall ~\varepsilon \in C,H(\varepsilon )=F(\varepsilon
)~or~G(\varepsilon )$, (as both are same set), and is written as $(F,A)%
\tilde{\cap}(G,B)=(H,C).$
\end{definition}

Pei and Miao pointed out that generally $F\left( \varepsilon \right) $ and $%
G\left( \varepsilon \right) $ may not be identical and thus revised the
above definition as:

\begin{definition}
\cite{sfst05pei}\label{intersection_def_of_Pei} Let $(F,A)$ and $(G,B)$ be
two soft sets over $X$. Intersection (also called bi-intersction by Feng
et.al. \cite{sfalg08fen}) of two soft sets $(F,A)$ and $(G,B)$ is a soft set 
$(H,C),$ where $C=A\cap B,$ and $\forall ~\varepsilon \in C,H(\varepsilon
)=F(\varepsilon )\cap G(\varepsilon )$. We write $(F,A)\tilde{\cap}%
(G,B)=(H,C).$
\end{definition}

We further point out that in Definition \ref{intersection_def_of_Pei}, $%
A\cap B$ must be nonempty to avoid the degenerate case. Hence the definition %
\ref{intersection_def_of_Pei} is improved as:

\begin{definition}
\label{intersection_def_Ours}Let $(F,A)$ and $(G,B)$ be two soft sets over $%
X $ with $A\cap B\neq \phi $. Intersection of two soft sets $(F,A)$ and $%
(G,B)$ is a soft set $(H,C),$ where $C=A\cap B,$ and $\forall ~\varepsilon
\in C,H(\varepsilon )=F(\varepsilon )\cap G(\varepsilon )$. We write $(F,A)%
\tilde{\cap}(G,B)=(H,C).$
\end{definition}

\section{\textbf{Mappings on Soft Classes\protect\bigskip }}

First we define:

\begin{definition}
\label{df_sftClass}Let $X$ \ be a universe and $E$ a set of attributes. Then
the collection of all soft sets over $X$ with attributes from $E$ is called
a soft class and is denoted as $\left( X,E\right) $.
\end{definition}

\begin{definition}
Let $\left( X,E\right) $ and $\left( Y,E^{\prime }\right) $ be soft classes.
Let $u:X\rightarrow Y$ and $p:E\rightarrow E^{\prime }$ be mappings. Then a
mapping $f:\left( X,E\right) \rightarrow \left( Y,E^{\prime }\right) $ is
defined as: for a soft set $\left( F,A\right) $ in $\left( X,E\right) $, $%
\left( f\left( F,A\right) ,B\right) ,$ $B=p\left( A\right) \subseteq
E^{\prime }$ is a soft set in $\left( Y,E^{\prime }\right) $ given by%
\begin{equation*}
f\left( F,A\right) \left( \beta \right) =\left\{ 
\begin{array}{cc}
u\left( \dbigcup\limits_{\alpha \in p^{-1}\left( \beta \right) \cap
A}F\left( \alpha \right) \right) , & \text{if }p^{-1}\left( \beta \right)
\cap A\neq \phi , \\ 
&  \\ 
\phi , & \text{otherwise,}%
\end{array}%
\right.
\end{equation*}%
for $\beta \in B\subseteq E^{\prime }.$ $\left( f\left( F,A\right) ,B\right) 
$ is called a soft image of a soft set $\left( F,A\right) .$ If $B=E^{\prime
},$ then we shall write $f\left( \left( F,A\right) ,E^{\prime }\right) $ as $%
f\left( F,A\right) .$
\end{definition}

\begin{definition}
Let $f:\left( X,E\right) \rightarrow \left( Y,E^{\prime }\right) $ be a
mapping from a soft class $\left( X,E\right) $ to another soft class $\left(
Y,E^{\prime }\right) ,$ and $\left( G,C\right) ,$ a soft set in soft class $%
\left( Y,E^{\prime }\right) ,$ where $C\subseteq E^{\prime }.$ Let $%
u:X\rightarrow Y$ and $p:E\rightarrow E^{\prime }$ be mappings. Then $\left(
f^{-1}\left( G,C\right) ,D\right) ,~D=p^{-1}\left( C\right) ,$ is a soft set
in the soft class $\left( X,E\right) ,$ defined as: 
\begin{equation*}
f^{-1}\left( G,C\right) \left( \alpha \right) =\left\{ 
\begin{array}{cc}
u^{-1}\left( G\left( p\left( \alpha \right) \right) \right) ,~ & p\left(
\alpha \right) \in C, \\ 
&  \\ 
\phi , & otherwise%
\end{array}%
\right.
\end{equation*}%
for $\alpha \in D\subseteq E.$ $\left( f^{-1}\left( G,C\right) ,D\right) $
is called a soft inverse image of $\left( G,C\right) .$ Hereafter we shall
write $\left( f^{-1}\left( G,C\right) ,E\right) $ as $f^{-1}\left(
G,C\right) .$
\end{definition}

\begin{example}
\label{ex_illustrating_def_of_fn}Let $X=\left\{ a,b,c\right\} ,~Y=\left\{
x,y,z\right\} ,~E=\left\{ e_{1},e_{2},e_{3},e_{4}\right\} ,$ $E^{\prime
}=\left\{ e_{1}^{\prime },e_{2}^{\prime },e_{3}^{\prime }\right\} $ and $%
\left( X,E\right) ,$ $\left( Y,E^{\prime }\right) ,$ soft classes. Define $%
u:X\rightarrow Y$ and $p:E\rightarrow E^{\prime }$ as: 
\begin{eqnarray*}
u\left( a\right) &=&y,~u\left( b\right) =z,~u\left( c\right) =y, \\
p\left( e_{1}\right) &=&e_{3}^{\prime },~p\left( e_{2}\right) =e_{3}^{\prime
},~p\left( e_{3}\right) =e_{2}^{\prime },~p\left( e_{4}\right)
=e_{3}^{\prime }.
\end{eqnarray*}%
Choose two soft sets over $X$ and $Y$ respectively as:%
\begin{eqnarray*}
\left( F,A\right) &=&\left\{ e_{2}=\left\{ {}\right\} ,~e_{3}=\left\{
a\right\} ,~e_{4}=\left\{ a,b,c\right\} \right\} , \\
\left( G,C\right) &=&\left\{ e_{1}^{\prime }=\left\{ x,z\right\}
,~e_{2}^{\prime }=\left\{ y\right\} \right\} .
\end{eqnarray*}%
Then the mapping $f:\left( X,E\right) \rightarrow \left( Y,E^{\prime
}\right) $ is given as: for a soft set $\left( F,A\right) $ in $\left(
X,E\right) ,$ $\left( f\left( F,A\right) ,B\right) $ where $B=p\left(
A\right) =\left\{ e_{2}^{\prime },e_{3}^{\prime }\right\} $ is a soft set in 
$\left( Y,E^{\prime }\right) $ obtained as follows:%
\begin{eqnarray*}
f\left( F,A\right) e_{2}^{\prime } &=&u\left( \bigcup F\left( \left\{
e_{3}\right\} \right) \right) =u\left( \left\{ a\right\} \right) =\left\{
y\right\} ,~~~\text{(since }p^{-1}\left( e_{2}^{\prime }\right) \cap
A=\left\{ e_{3}\right\} \text{),} \\
f\left( F,A\right) e_{3}^{\prime } &=&u\left( \bigcup_{\alpha \in
p^{-1}\left( e_{3}^{\prime }\right) \cap A}F\left( \alpha \right) \right)
=u\left( \left\{ F\left( e_{2}\right) \cup F\left( e_{4}\right) \right\}
\right) \\
&=&u\left( \left\{ {}\right\} \cup \left\{ a,b,c\right\} \right) =u\left(
\left\{ a,b,c\right\} \right) =\left\{ y,z\right\} ,\text{ (since }%
p^{-1}\left( e_{3}^{\prime }\right) \cap A=\left\{ e_{2},e_{4}\right\} \text{%
).}
\end{eqnarray*}%
Hence%
\begin{equation*}
\left( f\left( F,A\right) ,B\right) =\left\{ ~e_{2}^{\prime }=\left\{
y\right\} ,~e_{3}^{\prime }=\left\{ y,z\right\} \right\} .
\end{equation*}%
Next for the soft images, we have 
\begin{equation*}
f^{-1}\left( G,C\right) e_{3}=u^{-1}\left( G\left( p\left( e_{3}\right)
\right) \right) =u^{-1}\left( G\left( e_{2}^{\prime }\right) \right)
=u^{-1}\left( \left\{ y\right\} \right) =\left\{ a,c\right\} ,
\end{equation*}%
where $D=p^{-1}\left( C\right) =\left\{ e_{3}\right\} .$

Hence, we have $\left( f^{-1}\left( G,C\right) ,D\right) =\left\{
e_{3}=\left\{ a,c\right\} \right\} .$
\end{example}

\begin{definition}
\label{df_union_of_images}Let $f$ $:\left( X,E\right) \rightarrow \left(
Y,E^{\prime }\right) $ be a mapping and $\left( F,A\right) $, $\left(
G,B\right) $ soft sets in $\left( X,E\right) $. Then for $\beta \in
E^{\prime },$ soft union and intersection of soft images of (F, A) and (G,
B) in (X, E) are defined as : 
\begin{eqnarray*}
\left( f\left( F,A\right) ~\widetilde{\cup }~f\left( G,B\right) \right)
\beta &=&f\left( F,A\right) \beta \cup f\left( G,B\right) \beta , \\
\left( f\left( F,A\right) ~\widetilde{\cap }~f\left( G,B\right) \right)
\beta &=&f\left( F,A\right) \beta \cap f\left( G,B\right) \beta .
\end{eqnarray*}
\end{definition}

\begin{definition}
\label{df_union_of_inv_images}Let $f$ $:\left( X,E\right) \rightarrow \left(
Y,E^{\prime }\right) $ be a mapping and $\left( F,A\right) $, $\left(
G,B\right) $ soft sets in.$\left( Y,E^{\prime }\right) .$ Then for $\alpha
\in E,$ soft union and intersection of soft inverse images of soft sets $%
\left( F,A\right) ,\left( G,B\right) $\ are defined as:%
\begin{eqnarray*}
\left( f^{-1}\left( F,A\right) ~\widetilde{\cup }~f^{-1}\left( G,B\right)
\right) \alpha &=&f^{-1}\left( F,A\right) \alpha \cup f^{-1}\left(
G,B\right) \alpha , \\
\left( f^{-1}\left( F,A\right) ~\widetilde{\cap }~f^{-1}\left( G,B\right)
\right) \alpha &=&f^{-1}\left( F,A\right) \alpha \cap f^{-1}\left(
G,B\right) \alpha .
\end{eqnarray*}
\end{definition}

\begin{remark}
Note that the null (resp. absolute) soft set as defined by Maji \textit{%
et.al. \cite{sfst03maj}, is not unique in a soft space }$\left( X,E\right) ,$%
\textit{\ rather it depends upon }$A\subseteq E.$ Therefore, we denote it by 
$\widetilde{\Phi }_{A}$ (resp. $\widetilde{X}_{A}$)$.$ If $A=E,$ then we
denote it simply by $\widetilde{\Phi }$ (resp. $\widetilde{X}$)$,$ which is
unique null (resp. absolute) soft set, called full null (resp. full
absolute) soft set.
\end{remark}

\begin{theorem}
\label{th_fn_properties}Let $f$ $:\left( X,E\right) \rightarrow \left(
Y,E^{\prime }\right) $, $u$ $:X\rightarrow Y$ and $p:E\rightarrow E^{\prime
} $ be mappings. Then for soft sets $\left( F,A\right) ,$ $\left( G,B\right) 
$ and a family of soft sets $\left( F_{i},A_{i}\right) $ in the soft class $%
\left( X,E\right) ,$ we have:\newline
$\left( 1\right) $ $f\left( \widetilde{\Phi }\right) =\widetilde{\Phi }.$%
\newline
$\left( 2\right) $ $f\left( \widetilde{X}\right) ~\widetilde{\subseteq }~%
\widetilde{Y}.$ \newline
$\left( 3\right) ~f\left( \left( F,A\right) \widetilde{\cup }\left(
G,B\right) \right) =f\left( F,A\right) \widetilde{\cup }f\left( G,B\right) .$
\newline
\qquad In general $f\left( \underset{i}{\widetilde{\cup }}\left(
F_{i},A_{i}\right) \right) =\underset{i}{\widetilde{\cup }}f\left(
F_{i},A_{i}\right) .$\newline
$\left( 4\right) $ $f\left( \left( F,A\right) \widetilde{\cap }\left(
G,B\right) \right) ~\widetilde{\supseteq }~f\left( F,A\right) \widetilde{%
\cap }f\left( G,B\right) ,$ \newline
\qquad In general $f\left( \underset{i}{\widetilde{\cap }}\left(
F_{i},A_{i}\right) \right) \widetilde{\subseteq }\underset{i}{\widetilde{%
\cap }}f\left( F_{i},A_{i}\right) .$\newline
$\left( 5\right) $ If $\left( F,A\right) ~\widetilde{\subseteq }~\left(
G,B\right) ,$ then $f\left( F,A\right) ~\widetilde{\subseteq }~f\left(
G,B\right) .$
\end{theorem}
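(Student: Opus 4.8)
The plan is to verify each clause by evaluating both soft sets at an arbitrary attribute $\beta\in E'$ (resp. $\alpha\in E$) and reducing everything to elementary facts about the point map $u$ together with the index sets $p^{-1}(\beta)\cap A$. Two set-theoretic facts do almost all the work: first, $u$ carries arbitrary unions to unions, $u\left(\bigcup_i S_i\right)=\bigcup_i u(S_i)$, with $u(\phi)=\phi$; and second, $u$ is only sub-multiplicative on intersections, $u(S\cap T)\subseteq u(S)\cap u(T)$, with equality requiring $u$ injective. I would also use that $u$ is monotone, $S\subseteq T\Rightarrow u(S)\subseteq u(T)$, and the bookkeeping identities $(A-B)\cup(A\cap B)=A$ and $p^{-1}(\beta)\cap A\cap B\subseteq p^{-1}(\beta)\cap A$.

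For $(1)$ and $(2)$ I would substitute the full null and full absolute soft sets directly. For $\widetilde{\Phi}=(F,E)$ with $F(\alpha)=\phi$ for all $\alpha$, every value $f(\widetilde{\Phi})(\beta)$ is either $u(\phi)=\phi$ or $\phi$ by the ``otherwise'' clause, so $f(\widetilde{\Phi})=\widetilde{\Phi}$. For $\widetilde{X}=(F,E)$ with $F(\alpha)=X$, each nonempty value is $u(X)\subseteq Y=\widetilde{Y}(\beta)$, giving only the inclusion $f(\widetilde{X})\,\widetilde{\subseteq}\,\widetilde{Y}$ --- equality fails precisely when $u$ is not onto. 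Clause $(5)$ is monotonicity: from $(F,A)\,\widetilde{\subseteq}\,(G,B)$ I get $p(A)\subseteq p(B)$, the nesting $p^{-1}(\beta)\cap A\subseteq p^{-1}(\beta)\cap B$ of index sets, and the termwise inclusions $F(\alpha)\subseteq G(\alpha)$; the union over the smaller, termwise-smaller family sits inside the larger one, and applying the monotone $u$ yields $f(F,A)(\beta)\subseteq f(G,B)(\beta)$ for every $\beta$.

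The substance of $(3)$ is an index-regrouping identity. Writing $(F,A)\,\widetilde{\cup}\,(G,B)=(H,A\cup B)$, I would split the index set $p^{-1}(\beta)\cap(A\cup B)$ into its parts over $A-B$, $B-A$, and $A\cap B$, substitute the three cases of $H$, and use the bookkeeping identities to regroup $\bigcup_{\alpha\in p^{-1}(\beta)\cap(A\cup B)}H(\alpha)$ as $\bigl(\bigcup_{\alpha\in p^{-1}(\beta)\cap A}F(\alpha)\bigr)\cup\bigl(\bigcup_{\alpha\in p^{-1}(\beta)\cap B}G(\alpha)\bigr)$. Pushing the union-preserving $u$ through then gives $f(F,A)(\beta)\cup f(G,B)(\beta)$ exactly, and since neither $u$ nor the regrouping uses finiteness of the family, the arbitrary-family version is identical.

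The delicate clause is $(4)$, where the two printed assertions point in opposite directions. Setting $P=p^{-1}(\beta)\cap A$ and $Q=p^{-1}(\beta)\cap B$, the diagonal union $\bigcup_{\alpha\in P\cap Q}\bigl(F(\alpha)\cap G(\alpha)\bigr)$ is contained in both $\bigcup_{P}F(\alpha)$ and $\bigcup_{Q}G(\alpha)$, so monotonicity of $u$ forces $f\bigl((F,A)\,\widetilde{\cap}\,(G,B)\bigr)(\beta)\subseteq f(F,A)(\beta)\cap f(G,B)(\beta)$; this is exactly the general-family inclusion $\widetilde{\subseteq}$ stated, and I would prove it in this form. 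The two-set clause as printed instead asserts the reverse containment $\widetilde{\supseteq}$, and establishing that is the main obstacle. A direct element chase takes $y\in f(F,A)(\beta)\cap f(G,B)(\beta)$, produces witnesses $x_1\in F(\alpha_1)$ and $x_2\in G(\alpha_2)$ with $u(x_1)=u(x_2)=y$, and then needs a single point $x\in F(\alpha)\cap G(\alpha)$ at some common attribute $\alpha\in P\cap Q$ with $u(x)=y$. No such $x$ need exist: the witnesses may sit at different attributes (when several attributes share a $p$-image) and at different points that $u$ merely identifies (when $u$ is not injective). Both failures are genuine --- the reverse inclusion is not available in general, holding as an equality only when $p$ and $u$ are both injective --- so I would record the printed $\widetilde{\supseteq}$ in the two-set clause as a misprint for $\widetilde{\subseteq}$, which is the containment consistent with the general-family statement and the one that actually holds.
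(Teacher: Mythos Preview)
Your proposal is correct and follows essentially the same route as the paper: both arguments evaluate at an arbitrary $\beta\in E'$, split the index set $p^{-1}(\beta)\cap(A\cup B)$ into its three pieces for $(3)$, and use monotonicity of $u$ together with $p^{-1}(\beta)\cap A\cap B\subseteq p^{-1}(\beta)\cap A$ for $(4)$ and $(5)$. Your diagnosis of clause $(4)$ is exactly right --- the paper's own proof begins by announcing ``we show that $f\bigl((F,A)\,\widetilde{\cap}\,(G,B)\bigr)\beta\subseteq\bigl(f(F,A)\,\widetilde{\cap}\,f(G,B)\bigr)\beta$'' and establishes only this inclusion, so the printed $\widetilde{\supseteq}$ in the two-set clause is indeed a misprint for $\widetilde{\subseteq}$.
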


\begin{proof}
We only prove $\left( 3\right) -\left( 5\right) .$\newline
$\left( 3\right) $ For $\beta \in E^{\prime },$ we show that $f\left( \left(
F,A\right) \widetilde{\cup }\left( G,B\right) \right) \beta =\left( f\left(
F,A\right) ~\widetilde{\cup }~f\left( G,B\right) \right) \beta .$ Consider%
\begin{eqnarray*}
f\left( \left( F,A\right) \widetilde{\cup }\left( G,B\right) \right) \beta
&=&f\left( H,A\cup B\right) \beta \text{ ~~~(say)} \\
&=&\left\{ 
\begin{array}{cl}
u\left( \dbigcup\limits_{\alpha \in p^{-1}\left( \beta \right) \cap \left(
A\cup B\right) }H\left( \alpha \right) \right) , & \text{if }p^{-1}\left(
\beta \right) \cap \left( A\cup B\right) \neq \phi , \\ 
\phi , & \text{otherwise,}%
\end{array}%
\right. \\
\text{where }H\left( \alpha \right) &=&\left\{ 
\begin{array}{cc}
F\left( \alpha \right) , & \alpha \in A-B \\ 
G\left( \alpha \right) , & \alpha \in B-A \\ 
F\left( \alpha \right) \cup G\left( \alpha \right) , & \alpha \in A\cap B.%
\end{array}%
\right.
\end{eqnarray*}%
We consider the case, when $p^{-1}\left( \beta \right) \cap \left( A\cup
B\right) \neq \phi ,$ as otherwise it is trivial. Then%
\begin{equation}
f\left( \left( F,A\right) \widetilde{\cup }\left( G,B\right) \right) \beta
=u\left( \dbigcup \left\{ 
\begin{array}{cc}
F\left( \alpha \right) , & \alpha \in \left( A-B\right) \cap p^{-1}\left(
\beta \right) \\ 
G\left( \alpha \right) , & \alpha \in \left( B-A\right) \cap p^{-1}\left(
\beta \right) \\ 
F\left( \alpha \right) \cup G\left( \alpha \right) , & \alpha \in \left(
A\cap B\right) \cap p^{-1}\left( \beta \right)%
\end{array}%
\right. \right)  \tag{I}  \label{eq3-1}
\end{equation}%
Next, for the non-trivial case, using Definition \ref{df_union_of_images}
and for $\beta \in E^{\prime }$, we have%
\begin{eqnarray}
\left( f\left( F,A\right) ~\widetilde{\cup }~f\left( G,B\right) \right)
\beta &=&f\left( F,A\right) \beta ~\cup ~f\left( G,B\right) \beta  \notag \\
&=&u\left( \dbigcup\limits_{\alpha \in p^{-1}\left( \beta \right) \cap
A}F\left( \alpha \right) \right) \cup u\left( \dbigcup\limits_{\alpha \in
p^{-1}\left( \beta \right) \cap B}G\left( \alpha \right) \right)  \notag \\
&=&u\left( \dbigcup\limits_{\alpha \in p^{-1}\left( \beta \right) \cap
A}F\left( \alpha \right) \cup \dbigcup\limits_{\alpha \in p^{-1}\left( \beta
\right) \cap B}G\left( \alpha \right) \right)  \notag \\
&=&u\left( \dbigcup \left\{ 
\begin{array}{cc}
F\left( \alpha \right) , & \alpha \in \left( A-B\right) \cap p^{-1}\left(
\beta \right) \\ 
G\left( \alpha \right) , & \alpha \in \left( B-A\right) \cap p^{-1}\left(
\beta \right) \\ 
F\left( \alpha \right) \cup G\left( \alpha \right) , & \alpha \in \left(
A\cap B\right) \cap p^{-1}\left( \beta \right) .%
\end{array}%
\right. \right)  \TCItag{II}  \label{eq3-2}
\end{eqnarray}%
\newline
From (\ref{eq3-1}) and (\ref{eq3-2}), we have $\left( 3\right) .\bigskip
\medskip $\newline
$\left( 4\right) $ For $\beta \in E^{\prime },$ we show that $f\left( \left(
F,A\right) \widetilde{\cap }\left( G,B\right) \right) \beta \subseteq \left(
f\left( F,A\right) ~\widetilde{\cap }~f\left( G,B\right) \right) \beta .$
Consider%
\begin{equation*}
f\left( \left( F,A\right) \widetilde{\cap }\left( G,B\right) \right) \beta
=f\left( H,A\cap B\right) \beta =\left\{ 
\begin{array}{cl}
u\left( \dbigcup\limits_{\alpha \in p^{-1}\left( \beta \right) \cap \left(
A\cap B\right) }H\left( \alpha \right) \right) , & \text{if }p^{-1}\left(
\beta \right) \cap \left( A\cap B\right) \neq \phi , \\ 
&  \\ 
\phi , & \text{otherwise,}%
\end{array}%
\right.
\end{equation*}%
where $H\left( \alpha \right) =F\left( \alpha \right) \cap G\left( \alpha
\right) .$ We consider the case when $p^{-1}\left( \beta \right) \cap \left(
A\cap B\right) \neq \phi ,$ as otherwise it is trivial. Thus%
\begin{equation*}
f\left( H,A\cap B\right) \beta =u\left( \dbigcup\limits_{\alpha \in
p^{-1}\left( \beta \right) \cap \left( A\cap B\right) }H\left( \alpha
\right) \right) =u\left( \dbigcup\limits_{\alpha \in p^{-1}\left( \beta
\right) \cap \left( A\cap B\right) }\left( F\left( \alpha \right) \cap
G\left( \alpha \right) \right) \right) ,
\end{equation*}%
\newline
or 
\begin{equation}
f\left( \left( F,A\right) \widetilde{\cap }\left( G,B\right) \right) \beta
=u\left( \dbigcup\limits_{\alpha \in p^{-1}\left( \beta \right) \cap \left(
A\cap B\right) }\left( F\left( \alpha \right) \cap G\left( \alpha \right)
\right) \right)  \tag{I}  \label{eq4-1}
\end{equation}%
On the other hand, using Definition \ref{df_union_of_images}, we have%
\begin{eqnarray*}
\left( f\left( \left( F,A\right) \right) ~\widetilde{\cap }~f\left( \left(
G,B\right) \right) \right) \beta &=&f\left( F,A\right) \beta ~\cap ~f\left(
G,B\right) \beta \\
&=&\left( \left\{ 
\begin{array}{cl}
u\left( \dbigcup\limits_{\alpha \in p^{-1}\left( \beta \right) \cap
A}F\left( \alpha \right) \right) , & \text{if }p^{-1}\left( \beta \right)
\cap A\neq \phi , \\ 
&  \\ 
\phi , & \text{otherwise.}%
\end{array}%
\right. \right) ~\cap \\
&&\left( \left\{ 
\begin{array}{cl}
u\left( \dbigcup\limits_{\alpha \in p^{-1}\left( \beta \right) \cap
B}G\left( \alpha \right) \right) , & \text{if }p^{-1}\left( \beta \right)
\cap B\neq \phi , \\ 
&  \\ 
\phi , & \text{otherwise.}%
\end{array}%
\right. \right)
\end{eqnarray*}%
Ignoring the trivial case, we get%
\begin{multline*}
\left( f\left( F,A\right) \widetilde{\cap }f\left( G,B\right) \newline
\right) \beta =u\left( \dbigcup\limits_{\alpha \in p^{-1}\left( \beta
\right) \cap A}F\left( \alpha \right) \right) ~\cap ~u\left(
\dbigcup\limits_{\alpha \in p^{-1}\left( \beta \right) \cap B}G\left( \alpha
\right) \right) \\[1pt]
\supseteq u\left( \dbigcup\limits_{\alpha \in p^{-1}\left( \beta \right)
\cap \left( A\cap B\right) }\left( F\left( \alpha \right) \cap G\left(
\alpha \right) \right) \right) =f\left( \left( F,A\right) ~\widetilde{\cap }%
~\left( G,B\right) \right) \beta \text{ }
\end{multline*}%
$\bigskip $\newline
$\left( 5\right) ~$For $\beta \in E^{\prime }$%
\begin{equation*}
f\left( F,A\right) \beta =\left\{ 
\begin{array}{cc}
u\left( \dbigcup\limits_{\alpha \in p^{-1}\left( \beta \right) \cap
A}F\left( \alpha \right) \right) , & \text{if }p^{-1}\left( \beta \right)
\cap A\neq \phi , \\ 
\phi , & \text{otherwise.}%
\end{array}%
\right.
\end{equation*}%
We consider the case when $p^{-1}\left( \beta \right) \cap A\neq \phi ,$ as
otherwise it is trivial. Then%
\begin{equation*}
f\left( F,A\right) \beta =u\left( \dbigcup\limits_{\alpha \in p^{-1}\left(
\beta \right) \cap A}F\left( \alpha \right) \right) \subseteq u\left(
\dbigcup\limits_{\alpha \in p^{-1}\left( \beta \right) \cap B}G\left( \alpha
\right) \right) =f\left( G,B\right) \beta .
\end{equation*}%
This gives $\left( 5\right) $.\bigskip \newline
\end{proof}

In Theorem \ref{th_fn_properties}, inequalities $\left( 2\right) $ and $%
\left( 4\right) ,$ cannot be reversed, in general, as is shown in the
following:

\begin{example}
The soft classes $\left( X,E\right) ,\left( Y,E\right) $ and mapping $\ f$ $%
:\left( X,E\right) \rightarrow \left( Y,E^{\prime }\right) $ are as defined
in Example \ref{ex_illustrating_def_of_fn}. Then%
\begin{equation*}
\widetilde{Y}\not\subseteq f\left( \widetilde{X}\right) =\left\{
e_{1}^{^{\prime }}=\left\{ x,z\right\} ,e_{2}^{^{\prime }}=\left\{
x,z\right\} ,e_{3}^{^{\prime }}=\left\{ x,z\right\} \right\} .
\end{equation*}%
This shows that the reversal of inequality $\left( 2\right) $ is not true.%
\newline
To show that the reversal of $\left( 4\right) $ does not hold, choose soft
sets in $\left( X,E\right) $ as:%
\begin{eqnarray*}
\left( F,A\right) &=&\left\{ e_{1}=\left\{ c\right\} ,e_{2}=\left\{
b,c\right\} ,e_{3}=\left\{ a,b,c\right\} \right\} , \\
\left( G,B\right) &=&\left\{ e_{1}=\left\{ a\right\} ,e_{2}=\left\{
a,c\right\} ,e_{3}=\left\{ b\right\} ,e_{4}=\left\{ b,c\right\} \right\} .
\end{eqnarray*}%
Then calculations show that%
\begin{equation*}
f\left( F,A\right) \widetilde{\cap }f\left( G,B\right) =\left\{ \left\{
e_{2}^{\prime }=\left\{ z\right\} \right\} ,e_{3}^{\prime }=\left\{
y,z\right\} \right\} \widetilde{\not\subseteq }\left\{ \left\{ e_{2}^{\prime
}=\left\{ z\right\} ,e_{3}^{\prime }=\left\{ y\right\} \right\} \right\}
=f\left( \left( F,A\right) \widetilde{\cap }\left( G,B\right) \right) .
\end{equation*}
\end{example}

\begin{theorem}
\label{th_fn_inv_properties}Let $f$ $:\left( X,E\right) \rightarrow \left(
Y,E^{\prime }\right) $, $u$ $:X\rightarrow Y$ and $p:E\rightarrow E^{\prime
} $ be mappings. Then for soft sets $\left( F,A\right) ,$ $\left( G,B\right) 
$ and a family of soft sets $\left( F_{i},A_{i}\right) $ in the soft class $%
\left( Y,E^{\prime }\right) ,$ we have:\newline
$\left( 1\right) $ $f^{-1}\left( \widetilde{\Phi }\right) =\widetilde{\Phi }%
. $\newline
$\left( 2\right) $ $f^{-1}\left( \widetilde{Y}\right) =\widetilde{X}.$%
\newline
$\left( 3\right) $ $f^{-1}\left( \left( F,A\right) ~\widetilde{\cup }~\left(
G,B\right) \right) =f^{-1}\left( F,A\right) ~\widetilde{\cup }~f^{-1}\left(
G,B\right) .$ \newline
\qquad In general $f^{-1}\left( \underset{i}{\widetilde{\cup }}\left(
F_{i},A_{i}\right) \right) =\underset{i}{\widetilde{\cup }}~f^{-1}\left(
F_{i},A_{i}\right) .$\newline
$\left( 4\right) $ $f^{-1}\left( \left( F,A\right) ~\widetilde{\cap }~\left(
G,B\right) \right) =f^{-1}\left( F,A\right) ~\widetilde{\cap }~f^{-1}\left(
G,B\right) .$ \newline
\qquad In general $f^{-1}\left( \underset{i}{\widetilde{\cap }}\left(
F_{i},A_{i}\right) \right) =\underset{i}{\widetilde{\cap }}~f^{-1}\left(
F_{i},A_{i}\right) .$\newline
$\left( 5\right) $ If $\left( F,A\right) \widetilde{\subseteq }\left(
G,B\right) ,$ then $f^{-1}\left( F,A\right) ~\widetilde{\subseteq }%
~f^{-1}\left( G,B\right) .$\newline
\end{theorem}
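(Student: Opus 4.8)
The plan is to reduce all five statements to the single elementary fact that the preimage operation under an ordinary map commutes with Boolean operations: for any function $g$, any family $\{S_i\}$ of subsets of its codomain, and any two such subsets $S_1 \subseteq S_2$, one has $g^{-1}(\bigcup_i S_i) = \bigcup_i g^{-1}(S_i)$, $g^{-1}(\bigcap_i S_i) = \bigcap_i g^{-1}(S_i)$, $g^{-1}(S_1) \subseteq g^{-1}(S_2)$, $g^{-1}(\phi) = \phi$, and $g^{-1}$ of the whole codomain is the whole domain. Because $f^{-1}(G,C)(\alpha)$ equals $u^{-1}(G(p(\alpha)))$ whenever $p(\alpha) \in C$ and equals $\phi$ otherwise, and because the soft set $f^{-1}(G,C)$ carries the full parameter set $E$ by the stated convention, each part will follow by fixing an arbitrary $\alpha \in E$, evaluating both sides at $\alpha$, and comparing; no subtlety about differing parameter sets arises here, in contrast with the image case.

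First I would dispose of (1) and (2) directly. Since $\widetilde{\Phi}$ over $Y$ sends every $\beta \in E'$ to $\phi$, for each $\alpha \in E$ we get $f^{-1}(\widetilde{\Phi})(\alpha) = u^{-1}(\widetilde{\Phi}(p(\alpha))) = u^{-1}(\phi) = \phi$, so $f^{-1}(\widetilde{\Phi}) = \widetilde{\Phi}$; and since $\widetilde{Y}(\beta) = Y$ for every $\beta$, we get $f^{-1}(\widetilde{Y})(\alpha) = u^{-1}(Y) = X$ for every $\alpha \in E$, so $f^{-1}(\widetilde{Y}) = \widetilde{X}$. For (3) I would write $(F,A)\,\widetilde{\cup}\,(G,B) = (H, A\cup B)$ with $H$ as in Definition \ref{st-Union}, fix $\alpha \in E$, and split into the cases $p(\alpha) \in A-B$, $p(\alpha) \in B-A$, $p(\alpha) \in A\cap B$, and $p(\alpha) \notin A\cup B$. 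In the first two cases both sides reduce to a single $u^{-1}(F(p(\alpha)))$ (resp. $u^{-1}(G(p(\alpha)))$), the complementary term contributing $\phi$ via Definition \ref{df_union_of_inv_images}; in the third case the left side is $u^{-1}(F(p(\alpha)) \cup G(p(\alpha))) = u^{-1}(F(p(\alpha))) \cup u^{-1}(G(p(\alpha)))$, which is precisely the right side; in the last case both sides are $\phi$. Part (4) runs identically with $(F,A)\,\widetilde{\cap}\,(G,B) = (H, A\cap B)$, $H(\varepsilon) = F(\varepsilon) \cap G(\varepsilon)$ (Definition \ref{intersection_def_Ours}), and $u^{-1}$ distributing over $\cap$; the one point to watch is that when $p(\alpha)$ lies in exactly one of $A$, $B$ the right-hand intersection has a $\phi$ factor and hence vanishes, matching the left side where $p(\alpha) \notin A\cap B$. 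The arbitrary-family versions need no new idea, as $u^{-1}$ commutes with arbitrary unions and intersections. Finally, for (5), $(F,A)\,\widetilde{\subseteq}\,(G,B)$ gives $A \subseteq B$ and $F(\varepsilon) \subseteq G(\varepsilon)$ on $A$, so for $\alpha \in E$ either $p(\alpha) \notin A$ and $f^{-1}(F,A)(\alpha) = \phi$, or $p(\alpha) \in A \subseteq B$ and $f^{-1}(F,A)(\alpha) = u^{-1}(F(p(\alpha))) \subseteq u^{-1}(G(p(\alpha))) = f^{-1}(G,B)(\alpha)$ by monotonicity of $u^{-1}$.

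I do not expect a real obstacle. Unlike Theorem \ref{th_fn_properties}, where non-injectivity of $u$ forces the $\widetilde{\cap}$-statements to degenerate into proper inclusions, the inverse-image operator inherits the full exactness of set-theoretic preimages, so every inclusion in this statement is actually an equality. The only thing demanding care is the two-branch form of the definitions of $f^{-1}$ and of soft union and intersection, i.e. making sure the branches where $p(\alpha)$ falls outside the relevant parameter set are accounted for on both sides; this is routine bookkeeping rather than a genuine difficulty.
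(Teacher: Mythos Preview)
Your proposal is correct and follows essentially the same route as the paper: fix $\alpha \in E$, evaluate both sides via the definition of $f^{-1}$, and reduce each part to the fact that $u^{-1}$ commutes with the relevant Boolean operation. If anything, your case analysis is slightly more explicit than the paper's (you handle the branches $p(\alpha)\notin A\cup B$ and $p(\alpha)$ in exactly one of $A,B$ separately, and you also dispose of (1) and (2), which the paper omits), but the underlying argument is identical.
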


\begin{proof}
We only prove $\left( 3\right) -\left( 5\right) .$\newline
$\left( 3\right) $ For $\alpha \in E$%
\begin{eqnarray}
f^{-1}\left( \left( F,A\right) \widetilde{\cup }\left( G,B\right) \right)
\alpha &=&f^{-1}\left( H,A\cup B\right) \alpha  \notag \\
&=&u^{-1}\left( H\left( p\left( \alpha \right) \right) \right) ,\text{ }%
p\left( \alpha \right) \in A\cup B  \notag \\
&=&u^{-1}\left( H\left( \beta \right) \right) ,\text{ where }\beta =p\left(
\alpha \right)  \notag \\
&=&u^{-1}\left( \left\{ 
\begin{array}{cc}
F\left( \beta \right) , & \beta \in A-B \\ 
G\left( \beta \right) , & \beta \in B-A \\ 
F\left( \beta \right) \cup G\left( \beta \right) , & \beta \in A\cap B%
\end{array}%
\right. \right)  \TCItag{I}  \label{finv-eq3-1}
\end{eqnarray}%
Next, using Definition \ref{df_union_of_inv_images}, we have%
\begin{eqnarray}
\left( f^{-1}\left( F,A\right) ~\widetilde{\cup }~f^{-1}\left( G,B\right)
\right) \alpha &=&f^{-1}\left( F,A\right) \alpha ~\cup ~f^{-1}\left(
G,B\right) \alpha  \notag \\
&=&u^{-1}\left( F\left( p\left( \alpha \right) \right) \right) ~\cup
~u^{-1}\left( G\left( p\left( \alpha \right) \right) \right) ,\text{ }%
p\left( \alpha \right) \in A\cap B  \notag \\
&=&u^{-1}\left( \left\{ 
\begin{array}{cc}
F\left( \beta \right) , & \beta \in A-B \\ 
G\left( \beta \right) , & \beta \in B-A \\ 
F\left( \beta \right) \cup G\left( \beta \right) , & \beta \in A\cap B%
\end{array}%
\right. \right) \text{ }  \TCItag{II}  \label{finv-eq3-2}
\end{eqnarray}%
where $\beta =p\left( \alpha \right) .$

From (\ref{finv-eq3-1}) and (\ref{finv-eq3-2}), we obtain $\left( 3\right) $%
.\bigskip \newline
$\left( 4\right) $ For $\alpha \in E$%
\begin{eqnarray*}
\left( f^{-1}\left( \left( F,A\right) ~\widetilde{\cap }~\left( G,B\right)
\right) \right) \alpha &=&\left( f^{-1}\left( H,A\cap B\right) \right) \alpha
\\
&=&u^{-1}\left( H\left( p\left( \alpha \right) \right) \right) ,\text{ }%
p\left( \alpha \right) \in A\cap B \\
&=&u^{-1}\left( F\left( \beta \right) \cap G\left( \beta \right) \right) ,%
\text{ where }\beta =p\left( \alpha \right) . \\
&=&u^{-1}\left( F\left( \beta \right) \right) \cap u^{-1}\left( G\left(
\beta \right) \right) \\
&=&u^{-1}\left( F\left( p\left( \alpha \right) \right) \right) \cap
u^{-1}\left( G\left( p\left( \alpha \right) \right) \right) \\
&=&\left( f^{-1}\left( F,A\right) \widetilde{\cap }f^{-1}\left( G,B\right)
\right) \alpha .
\end{eqnarray*}%
This proves $\left( 4\right) $.\bigskip \newline
$\left( 5\right) ~$For $\alpha \in E,$ consider%
\begin{eqnarray*}
f^{-1}\left( F,A\right) \alpha &=&u^{-1}\left( F\left( p\left( \alpha
\right) \right) \right) =u^{-1}\left( F\left( \beta \right) \right) ,\text{
where }\beta =p\left( \alpha \right) \\
&\subseteq &u^{-1}\left( G\left( \beta \right) \right) =u^{-1}\left( G\left(
p\left( \alpha \right) \right) \right) =f^{-1}\left( G,B\right) \alpha .
\end{eqnarray*}%
This gives $\left( 5\right) .\bigskip $
\end{proof}

\section{\textbf{An Application in Medical Expert Systems\protect\bigskip }}

An important task of a medical expert system is to transform a patient's
complaints/symptoms into a set of possible causes and their respective
importance from the view point of a medical specialist. A patient's case may
easily be encoded into a soft set. Suppose following is the narration by the
patient:

\begin{quotation}
I have three main complaints viz. burning in stomach, headache and
sleeplessness. Whatever sleep I get, is unrefreshing and semi-conscious. I
mean, I am always aware what is transpiring in the room when asleep. I have
some pain in joints and backbone. To a less degree I also suffer from
depression and anxiety.
\end{quotation}

This can be written as the following soft set:%
\begin{equation*}
\left( F,A\right) =\left\{ 
\begin{array}{c}
\text{high importance}=\left\{ \text{burning in stomach, headache,
sleeplessness}\right\} ,~ \\ 
\text{medium importance}=\left\{ \text{semi-coscious sleep}\right\} , \\ 
\text{low importance}=\left\{ \text{joint pain, backbone pain, depression,
anxiety}\right\}%
\end{array}%
\right\}
\end{equation*}%
The medical knowledge may be encoded in the form of a look-up tables.
Look-up tables are the computer representation of the notion of mapping in
mathematics. Suppose our medical experts have provided us with following
knowledge:%
\begin{equation*}
\begin{tabular}{llllll}
$u\left( \text{burning in stomach}\right) $ & $=$ & $\text{acidity,}$ & $%
u\left( \text{headache}\right) $ & $=$ & $\text{blood pressure,}$ \\ 
$u\left( \text{sleeplessness}\right) $ & $=$ & $\text{acidity,}$ & $u\left( 
\text{backbone pain}\right) $ & $=$ & wrong posture, \\ 
$u\left( \text{semi-conscious sleep}\right) $ & $=$ & $\text{fatigue,}$ & $%
u\left( \text{joint pain}\right) $ & $=$ & $\text{acidity}$, \\ 
$u\left( \text{depression}\right) $ & $=$ & $\text{low energy level,}$ & $%
u\left( \text{anxiety}\right) $ & $=$ & low energy level,%
\end{tabular}%
\end{equation*}%
and%
\begin{equation*}
\begin{tabular}{ll}
$p\left( \text{high importance}\right) =$ in$\text{frequent high potency,}$
& $p\left( \text{medium importance}\right) =\text{frequent low potency .}$%
\end{tabular}%
\end{equation*}%
For the sake of ease in mathematical manipulation we denote the symptoms and
gradations by symbols as follows:%
\begin{equation*}
\begin{tabular}{lll}
$b$ & $=$ & burning in stomach \\ 
$h$ & $=$ & headache \\ 
$s$ & $=$ & sleeplessness \\ 
$c$ & $=$ & semi-conscious sleep \\ 
$j$ & $=$ & joint pain \\ 
$p$ & $=$ & backbone pain \\ 
$d$ & $=$ & depression \\ 
$a$ & $=$ & anxiety%
\end{tabular}%
\text{ , }%
\begin{tabular}{lll}
$e_{1}$ & $=$ & high importance \\ 
$e_{2}$ & $=$ & medium importance \\ 
$e_{3}$ & $=$ & low importance%
\end{tabular}%
\text{ ~}
\end{equation*}%
and%
\begin{equation*}
\begin{tabular}{lll}
$\alpha $ & $=$ & acidity \\ 
$\beta $ & $=$ & blood pressure \\ 
$\gamma $ & $=$ & fatigue \\ 
$\delta $ & $=$ & wrong posture \\ 
$\lambda $ & $=$ & depression \\ 
$\mu $ & $=$ & mood disorder%
\end{tabular}%
\text{ , }%
\begin{tabular}{lll}
$e_{1}^{\prime }$ & $=$ & infrequent high potency \\ 
$e_{2}^{\prime }$ & $=$ & frequent low potency%
\end{tabular}%
.
\end{equation*}

Thus we have two soft classes $\left( X,E\right) $ and $\left( Y,E^{\prime
}\right) $ with $X=\left\{ b,h,s,c,j,p,d,a\right\} $, $E=\left\{
e_{1},e_{2},e_{3}\right\} $ and $Y=\left\{ \alpha ,\beta ,\gamma ,\delta
,\lambda ,\mu \right\} ,$ $E^{\prime }=\left\{ e_{1}^{\prime },e_{2}^{\prime
}\right\} .$ $\left( X,E\right) $ is the soft class of symptoms and their
importance for the patient, and $\left( Y,E^{\prime }\right) $ represents
causes and medical preference for treatment. The soft set of patient's
narration may be given as:%
\begin{equation*}
\left( F,A\right) =\left\{ e_{1}=\left\{ b,h,s\right\} ,e_{2}=\left\{
c\right\} ,e_{3}=\left\{ j,p,d,a\right\} \right\} .
\end{equation*}%
As a first task of the medical expert system, stored medical knowledge is to
be applied on the given case. This knowledge, in the language of computer
programming, is given as look-up tables.\ Mappings $u:X\rightarrow Y$ and $%
p:E\rightarrow E^{\prime }$ are defined as:%
\begin{eqnarray*}
u\left( b\right) &=&\alpha ,~u\left( h\right) =\beta ,~u\left( s\right)
=\alpha ,~u\left( c\right) =\gamma ,~u\left( j\right) =\alpha ,~u\left(
p\right) =\delta ,~u\left( d\right) =\mu ,~u\left( a\right) =\mu , \\
p\left( e_{1}\right) &=&e_{1}^{\prime },~p\left( e_{2}\right) =e_{2}^{\prime
}.
\end{eqnarray*}%
Calculations give:%
\begin{eqnarray*}
f\left( F,A\right) &=&\left\{ e_{1}^{\prime }=\left\{ \alpha ,\beta \right\}
,e_{2}^{\prime }=\left\{ \gamma \right\} \right\} \\
&=&\left\{ \text{infrequent high potency}=\left\{ \text{acidity, blood
pressure}\right\} ,\text{ frequent low potency }=\left\{ \text{fatigue}%
\right\} \right\} .
\end{eqnarray*}

\begin{conclusion}
A soft set, being a collection of information granules, is the mathematical
formulation of approximate reasoning about information systems. In this
paper, we define the notion of mapping on soft classes. Several properties
of soft images and soft inverse images have been established and supported
by examples and counterexamples. \ Finally, these notions have been applied
to the problem of medical diagnosis. It is hoped that these notions will be
useful for the researchers to further promote and advance this research in
Soft Set Theory.
\end{conclusion}

\end{document}